\numberwithin{equation}{section}
\def\ni {\noindent}
\newcommand{\N}{\mathbb{N}}
\newcommand{\C}{\mathbb{C}}
\newtheorem{thm}{Theorem}[section]
\newtheorem{lem}[thm]{Lemma}
\newtheorem{cor}[thm]{Corollary}
\title{\textbf{A Note on Chromatic Blending of Colour Clusters}}
\author{Johan Kok}
\affil{\small Tshwane Metropolitan Police Department, \\ City of Tshwane, South Africa\\ {\tt kokkiek2@tshwane.gov.za}}
\author{Sudev Naduvath}
\affil{\small Centre for Studies in Discrete Mathematics\\ Vidya Academy of Science \& Technology\\ Thrissur, India.\\ {\tt sudevnk@gmail.com}}
\author{Muhammad Kamran Jamil}
\affil{\small Department of Mathematics\\ Riphah Institute of Computing and Applied Sciences\\ Riphah International University\\ Lahore, Pakistan.\\ {\tt m.kamran.sms@gmail.com}}
\date{}
\begin{document}
\maketitle

\begin{abstract}
\ni For a colour cluster $\C =(\mathcal{C}_1,\mathcal{C}_2, \mathcal{C}_3,\dots,\mathcal{C}_\ell)$, $\mathcal{C}_i$ is a colour class, and $|\mathcal{C}_i|=r_i \geq 1$, we investigate a simple connected graph structure $G^{\C}$, which represents a graphical embodiment of the colour cluster such that the chromatic number $\chi(G^{\C})= \ell,$ and the number of edges is a maximum, denoted $\varepsilon^+(G^{\C})$. We also extend the study by inducing new colour clusters recursively by blending the colours of all pairs of adjacent vertices. Recursion repeats until a maximal homogeneous blend between all $\ell$ colours is obtained. This is called total chromatic blending. Total chromatic blending models for example, total genetic, chemical, cultural or social orderliness integration.  
\end{abstract}

\ni \textbf{Keywords:} Graphical embodiment, colour cluster, colour classes, edge-maximality, chromatic blending.

\vspace{0.25cm}

\ni \textbf{Mathematics Subject Classification:} 05C15, 05C38, 05C75, 05C85.
 
\section{Introduction}

For general notation and concepts in graphs and digraphs see \cite{BM1,CL1,FH,DBW}. For colouring concepts in graph theory, please see \cite{CZ1,JT1,MK1}. Unless stated otherwise, all graphs mentioned in this paper are simple, connected  and undirected graphs.

It is well known that the chromatic number $\chi(G)\geq 1$ of a graph $G$ is the minimum number of distinct colours that allow a proper colouring of $G$. Such a colouring of $G$ is called a \textit{chromatic colouring} of $G$. In \cite{KSC}, the number of times a colour $c_j$ has been allocated is called the \textit{colour weight} of the colour $c_i$ and is denoted by $\theta(c_i) \geq 1$. Hence, the colours can be partitioned into non-empty colour classes $\mathcal{C}_i$, $1\leq i\leq \chi(G)$ with each colour class, say $\mathcal{C}_j =(\underbrace{c_j,c_j,c_j,\dots,c_j}_{\theta(c_j)})$. 

Analogous to set theory notation, let $\C = \bigcup\limits_{1\leq i \leq \chi(G)}\mathcal{C}_i$, and be called a colour cluster. It is easy to see that for $\chi(G)\geq 2$ a graph $G^{\C}$ with $\max\varepsilon(G^{\C})$ that requires the colour cluster $\C$ to allow exactly a chromatic colouring (minimum proper colouring) is the complete $\chi(G)$-partite graph $K_{\theta(c_i),\dots,_{\forall c_i}}$ or put differently, the complete $\ell$-partite graph with vertex partitioning $V_i(G^{\C}),\ 1\leq i \leq \ell$ such that $|V_i(G^{\C})| = |\mathcal{C}_i|$ and $\bigcup\limits_{1\leq i \leq \ell} V_i(G^{\C}) = V(G^{\C})$. 

\section{Graphical Embodiment of $\C$}

As stated earlier, for any colour cluster $\C = (\mathcal{C}_1,\mathcal{C}_2,\mathcal{C}_3,\dots,\mathcal{C}_\ell)$ with $|\mathcal{C}_i| = r_i >0$, $\ell\geq 2$, $1\leq i \leq \ell$ the connected graphical embodiment with maximum edges that allows $\C$ a chromatic colouring is the complete $\ell$-partite graph $G^{\C}= K_{r_1,r_2,r_3,\dots,r_\ell}$. The maximum number of edges, denoted by $\varepsilon^+(G^{\C})$, can be expressed in at least two useful ways, as mentioned below. 

If $N=\sum\limits_{i=1}^{\ell}r_i$, then $\varepsilon^+(G^{\C})=\binom{N}{2}-\sum\limits_{i=1}^{\ell}\binom{r_i}{2}$. The second useful way to express the maximum number of edges of $G^{\C}$ is by considering the set $A = \{r_1,r_2,r_3,\dots,r_\ell \}$ and the set of subsets $A'=\{\{r_i,r_j\}: i\neq j, \forall~r_i,r_j\in A\}$. It follows that $\varepsilon^+(G^{\C}) = \sum\limits_{}r_i\cdot r_j$, for all $\{r_i,r_j\}\in A'$.

\ni The following lemma, provided in \cite{KSM1}, is a useful result in our present study.

\begin{lem}
For any colour cluster $\C = (\mathcal{C}_1,\mathcal{C}_2,\mathcal{C}_3,\dots,\mathcal{C}_\ell)$ with $|\mathcal{C}_i| = r_i >0$, $\ell\geq 2$, $1\leq i \leq \ell$ the connected graphical embodiment with minimum edges that allows $\C$ a proper colouring has $\sum\limits_{i=1}^{\ell}r_i -1$ edges. 
\end{lem}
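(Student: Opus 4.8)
The plan is to show two things: first, that any connected graph on $N = \sum_{i=1}^\ell r_i$ vertices must have at least $N-1$ edges, and second, that this bound is actually achievable by some connected graph that admits a proper colouring realising the colour cluster $\C$. The lower bound half is the standard fact that a connected graph on $N$ vertices has at least $N-1$ edges, with equality precisely for trees; I would either cite this or give the one-line induction on $N$ (remove a non-cut vertex, or contract an edge). So the substantive content is the upper bound: exhibiting a tree $T$ on $N$ vertices together with a proper $\ell$-colouring in which colour $c_i$ is used exactly $r_i$ times.

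For the construction I would build such a tree explicitly. Take vertex sets $V_1,\dots,V_\ell$ with $|V_i| = r_i$, to receive colours $c_1,\dots,c_\ell$ respectively. Since every tree is bipartite, the only constraint a tree imposes on a proper colouring is that adjacent vertices differ, so essentially any assignment of the $r_i$ counts is fine as long as the underlying graph is connected and acyclic. A clean way: designate one vertex $u \in V_1$ as a root and join $u$ to one vertex in each of $V_2,\dots,V_\ell$ (this uses $\ell - 1$ edges and connects one representative of every colour class); then attach every remaining vertex of every class as a pendant (leaf) to $u$, except we must be careful not to create a monochromatic edge — a leftover vertex of $V_1$ cannot hang off $u$. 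So instead, hang every remaining vertex of $V_1$ off the chosen vertex $w_2 \in V_2$, and hang every remaining vertex of $V_j$ ($j \ge 2$) off $u$. The result is a tree (a "double star"-like spider), it is connected, it has exactly $N-1$ edges, and the colouring using $c_i$ on $V_i$ is proper because the only edges are incident to $u$ (colour $c_1$, joined only to colours $c_2,\dots,c_\ell$) or incident to $w_2$ (colour $c_2$, joined to $u$ of colour $c_1$ and to leftover $V_1$-vertices of colour $c_1$). One should note the degenerate case $\ell = 2$ with, say, $r_2 = 1$: then $V_2 = \{w_2\}$, $u$ is joined to $w_2$, and all other $V_1$-vertices hang off $w_2$ — a star $K_{1,N-1}$ centred at $w_2$, which still works.

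Hence every connected graphical embodiment of $\C$ has at least $N-1$ edges, and the tree just constructed attains it, so the minimum is exactly $N - 1 = \sum_{i=1}^\ell r_i - 1$.

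The main obstacle — really the only place a careless argument could slip — is the proper-colouring constraint in the construction: one must make sure no two vertices of the same colour class end up adjacent, which rules out the naive "hang everything off a single root" star and forces the small two-centre modification above, plus a sanity check on the boundary cases (some $r_i = 1$, or $\ell = 2$). Everything else is routine: the lower bound is a textbook fact about connected graphs, and the edge count of the constructed tree is immediate.
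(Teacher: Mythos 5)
Your proposal is correct and complete: the lower bound is the standard fact that a connected graph on $N=\sum_{i=1}^{\ell}r_i$ vertices has at least $N-1$ edges, and your two-centre tree (one representative of each class joined to a root $u\in V_1$, leftover $V_j$-vertices for $j\geq 2$ pendant at $u$, leftover $V_1$-vertices pendant at $w_2$) is a valid connected, properly coloured realisation with exactly $N-1$ edges. Note that the paper itself gives no proof of this lemma --- it is quoted from the reference [KSM1] --- but the construction sketched there (start from the induced star on one vertex $v_{i,1}$ from each colour class and attach the remaining vertices) is essentially the same tree you build, so your argument matches the intended one; your explicit attention to avoiding a monochromatic edge when hanging the leftover $V_1$-vertices, and to the boundary cases $r_i=1$, is exactly the care the cited construction requires.
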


Two types of methodologies for constructing graphical embodiments of colour clusters have been introduced in \cite{KSM1}. In the first type construction, in order to obtain a graphical embodiment $G^{\C}_1$, we first consider the induced star subgraph $\langle v_{1,1},v_{2,1},v_{3,1},\dots,v_{\ell,1}\rangle$ of $G_1$ and then add the edges to obtain a complete subgraph, $K_\ell$. Note that $\frac{1}{2}(\ell -1)(\ell -2)$ edges are to be added in this process. This fact has brought an existence theorem for graphical embodiments of colour clusters as given below.

\begin{thm}
{\rm \cite{KSM1}} For any colour cluster $\C$ there exists at least one graphical embodiment both with minimum number of edges which allows $\C$ to be a chromatic colouring and $\chi(G^{\C}) = \ell$.
\end{thm}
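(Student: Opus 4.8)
The plan is to give an explicit construction and then check that it meets each requirement in turn: it is a connected graphical embodiment of $\C$, it satisfies $\chi(G^{\C})=\ell$, the assignment of colours it carries is a \emph{chromatic} (i.e.\ minimum proper) colouring realising $\C$, and its edge count is as small as these three conditions permit. First I would fix, for each colour class $\mathcal{C}_i$, a representative vertex $v_{i,1}$, and, following the first-type methodology recalled above, build a tree $G_1$ on the $N=\sum_{i=1}^{\ell}r_i$ vertices in which the representatives $v_{1,1},\dots,v_{\ell,1}$ induce a star: make $v_{1,1}$ adjacent to each of $v_{2,1},\dots,v_{\ell,1}$, attach the remaining $r_1-1$ vertices of $\mathcal{C}_1$ as pendants at $v_{2,1}$, and attach the remaining $r_i-1$ vertices of each $\mathcal{C}_i$ with $i\ge 2$ as pendants at $v_{1,1}$. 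Colouring every vertex of $\mathcal{C}_i$ by $c_i$ is proper on $G_1$, so $G_1$ is a connected graphical embodiment of $\C$ with exactly $N-1$ edges, which is minimum by Lemma~2.1. I then form $G^{\C}:=G^{\C}_1$ by adding to $G_1$ the $\tfrac12(\ell-1)(\ell-2)$ missing edges among $\{v_{1,1},\dots,v_{\ell,1}\}$, turning that star into a $K_\ell$.

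Next I would verify the three properties. Connectivity is inherited from $G_1$ since only edges were added. Because $G^{\C}$ contains $K_\ell$ as a subgraph, $\chi(G^{\C})\ge\ell$; and the assignment $v\in\mathcal{C}_i\mapsto c_i$ is still proper, since every edge added joins two representatives of distinct classes, so $\chi(G^{\C})\le\ell$ and hence $\chi(G^{\C})=\ell$. As this $\ell$-colouring uses exactly $\ell=\chi(G^{\C})$ colours with the prescribed class sizes $r_1,\dots,r_\ell$, it is precisely a chromatic colouring realising $\C$. Counting, $G^{\C}$ has $(N-1)+\tfrac12(\ell-1)(\ell-2)=N-\ell+\binom{\ell}{2}$ edges, matching the "$\tfrac12(\ell-1)(\ell-2)$ edges added" observation that motivates the theorem.

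For the minimality claim I would show every connected graph $H$ on $N$ vertices with $\chi(H)=\ell$ has at least $N-\ell+\binom{\ell}{2}$ edges: pass to a vertex-$\ell$-critical subgraph $H_0$ on $k$ vertices, where criticality forces $\delta(H_0)\ge\ell-1$ and $k\ge\ell$, hence $|E(H_0)|\ge\tfrac{k(\ell-1)}{2}$; then contract $V(H_0)$ to a point to see that $H$ has at least $N-k$ further edges meeting the complement of $V(H_0)$, and combine. For $\ell\ge 3$ this bound is minimised at $k=\ell$, giving exactly the count achieved by $G^{\C}$ (the case $\ell=2$ being Lemma~2.1 itself). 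The step where I would be most careful is the augmentation: one must complete a set of \emph{one representative per class} into $K_\ell$, so that no added edge lies inside a colour class and the colouring stays proper — hence chromatic — while at the same time no edge beyond the forced $\tfrac12(\ell-1)(\ell-2)$ is used, so that minimality is not spoiled; this simultaneous constraint is the only genuine obstacle, the remaining verifications being routine.
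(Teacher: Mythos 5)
Your proposal is correct, and it actually does more work than the paper, which offers no proof of this statement at all: the theorem is imported from \cite{KSM1}, and the paper merely sketches the Type-I construction (a spanning tree in which one representative per colour class induces a star, completed to a $K_\ell$ by adding $\tfrac12(\ell-1)(\ell-2)$ edges) in the paragraph preceding the theorem. Your construction reproduces exactly that sketch, and your verification of connectivity, properness, and $\chi(G^{\C})=\ell$ (clique forces $\ge\ell$, the exhibited colouring gives $\le\ell$) is the routine part the paper leaves implicit; your edge count $N-1+\tfrac12(\ell-1)(\ell-2)$ agrees with the paper's stated value $\varepsilon^-(G^{\C})=\sum_i r_i+\tfrac{\ell}{2}(\ell-3)$. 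What is genuinely new relative to the paper is your lower-bound argument for minimality: passing to a vertex-$\ell$-critical subgraph $H_0$ on $k\ge\ell$ vertices with $\delta(H_0)\ge\ell-1$, then using connectivity of the quotient to harvest $N-k$ further edges, giving $|E(H)|\ge N+k(\ell-3)/2$, minimised at $k=\ell$ for $\ell\ge3$ and handled by Lemma~2.1 for $\ell=2$. The paper simply asserts the value of $\varepsilon^-(G^{\C})$ without justifying that no connected graph with $\chi=\ell$ on $N$ vertices can do better, so your critical-subgraph argument fills a real gap and is sound (vertex-critical graphs with $\chi\ge2$ are connected and the edge sets counted in the two stages are disjoint, so the addition is legitimate). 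The only point worth flagging is cosmetic: your lower bound is proved over all connected graphs with $\chi=\ell$, a larger class than embodiments realising the specific cluster $\C$, which only strengthens the conclusion.
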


For the colour cluster $\C=(\mathcal{C}_1)$ the graph $G^{\C}$ is a null graph (no edges). Henceforth, a colour cluster will have at least two colour classes unless mentioned otherwise. Note that $\varepsilon^-(G^{\C}) = \sum\limits_{i=1}^{\ell}r_i + \frac{\ell}{2}(\ell -3)$. Hence, we have the following theorem.
 
\begin{thm}
For a colour cluster $\C = (\mathcal{C}_i)$, $2\leq i \leq \ell$ we have $\varepsilon^-(G^{\C}) =\varepsilon^+(G^{\C})$ if and only if $|\mathcal{C}_i|=1$, $\forall~ 1\leq i\leq \ell$. 
\end{thm}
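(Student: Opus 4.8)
The plan is to collapse the whole statement to one closed-form comparison of the two edge counts. Both are already available: $\varepsilon^+(G^{\C}) = \binom{N}{2} - \sum_{i=1}^{\ell}\binom{r_i}{2}$ with $N = \sum_{i=1}^{\ell} r_i$, and $\varepsilon^-(G^{\C}) = N + \frac{\ell}{2}(\ell-3)$. First I would put $r_i = s_i + 1$, so that $s_i = r_i - 1 \ge 0$ and $m := \sum_{i=1}^{\ell} s_i = N - \ell$, and then simplify $\varepsilon^+(G^{\C}) - \varepsilon^-(G^{\C})$. Using $\binom{s_i+1}{2} = \binom{s_i}{2} + s_i$, the Vandermonde-type identity $\binom{\ell+m}{2} = \binom{\ell}{2} + \ell m + \binom{m}{2}$, and the elementary fact $\binom{m}{2} - \sum_{i}\binom{s_i}{2} = \sum_{i<j} s_i s_j$, the difference collapses to
\[
\varepsilon^+(G^{\C}) - \varepsilon^-(G^{\C}) \;=\; (\ell-2)\sum_{i=1}^{\ell} s_i \;+\; \sum_{1\le i<j\le\ell} s_i s_j .
\]
This identity is the computational heart of the proof; everything after it is a short consequence.

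For the forward implication: if $|\mathcal{C}_i| = 1$ for every $i$ then every $s_i = 0$, the right-hand side above is $0$, and $\varepsilon^-(G^{\C}) = \varepsilon^+(G^{\C})$ --- indeed both equal $\binom{\ell}{2}$, since in this case $G^{\C} = K_\ell$. For the reverse implication, note that the right-hand side of the displayed identity is a sum of two visibly non-negative terms: the $s_i$ and the products $s_i s_j$ are non-negative integers, and $\ell - 2 > 0$ because $\ell \ge 3$. Hence the difference can vanish only if both terms vanish; in particular $(\ell-2)\sum_i s_i = 0$ forces $\sum_i s_i = 0$, i.e. $s_i = 0$ for all $i$, i.e. $|\mathcal{C}_i| = 1$ for all $i$. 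That finishes the equivalence.

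The one place needing genuine care --- and the natural obstacle --- is precisely the step where $\ell \ge 3$ is invoked to pass from $(\ell-2)\sum_i s_i = 0$ to $\sum_i s_i = 0$. For $\ell = 2$ the coefficient $\ell - 2$ vanishes, the linear term disappears, the identity degenerates to $\varepsilon^+(G^{\C}) - \varepsilon^-(G^{\C}) = s_1 s_2$, and this is already $0$ as soon as one class is a singleton (for instance $\C = (\mathcal{C}_1, \mathcal{C}_2)$ with $r_1 = 1$ and $r_2$ arbitrary, where $G^{\C} = K_{1,r_2}$ is a tree and is thus simultaneously edge-minimal and edge-maximal). So the write-up should keep the hypothesis $\ell \ge 3$ explicit and, if $\ell = 2$ is to be covered at all, treat it separately. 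A more combinatorial alternative to the one-shot identity is an induction on $m = N - \ell$: shrinking one class of size $r_k \ge 2$ to size $r_k - 1$ lowers $\varepsilon^-(G^{\C})$ by exactly $1$ but lowers $\varepsilon^+(G^{\C})$ by $N - r_k = \sum_{j\ne k} r_j \ge \ell - 1$, so for $\ell \ge 3$ the gap $\varepsilon^+(G^{\C}) - \varepsilon^-(G^{\C})$ is strictly positive whenever some $r_k \ge 2$ and is $0$ at the base case, the all-singletons cluster; this gives the same theorem with the role of $\ell \ge 3$ equally visible.
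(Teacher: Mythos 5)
Your proposal is correct and takes a genuinely different, and in fact tighter, route than the paper. The paper argues by cases: it first checks the all-singleton cluster (where $G^{\C}=K_\ell$ and both counts equal $\binom{\ell}{2}$), then computes $\varepsilon^-$ and $\varepsilon^+$ when exactly one class has size at least $2$, asserts $\varepsilon^-<\varepsilon^+$, and waves at ``immediate induction'' for the general case; the converse is dispatched by observing that $K_\ell$ corresponds to the all-singleton cluster. You instead reduce everything to the single closed-form identity
\[
\varepsilon^+(G^{\C})-\varepsilon^-(G^{\C})=(\ell-2)\sum_{i=1}^{\ell}s_i+\sum_{1\le i<j\le \ell}s_is_j,\qquad s_i=r_i-1,
\]
which I have checked and which is correct; both directions of the equivalence then fall out by inspecting a sum of non-negative terms, with no induction needed. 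What your approach buys is precision about the hypothesis: the identity makes it transparent that the linear term carries the coefficient $\ell-2$, so the reverse implication genuinely requires $\ell\ge 3$. This is not pedantry --- for $\ell=2$ the theorem as stated is false (the star $K_{1,r}$ with $r\ge 2$ is simultaneously the edge-minimal and edge-maximal embodiment of $(\mathcal{C}_1,\mathcal{C}_2)$ with $r_1=1$, $r_2=r$, yet not all classes are singletons), and the paper's own intermediate claim that one class of size $\ge 2$ forces $\varepsilon^-<\varepsilon^+$ silently breaks at $\ell=2$, since there the difference is $(k-1)(\ell-2)=0$. Your writeup correctly isolates this degenerate case and proposes either excluding it or treating it separately; the paper does neither. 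Your alternative induction on $N-\ell$ (shrinking one class drops $\varepsilon^-$ by $1$ but $\varepsilon^+$ by at least $\ell-1$) is also sound and again makes the role of $\ell\ge 3$ explicit.
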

\begin{proof}
For a colour cluster $\C = (\mathcal{C}_i)$, $1\leq i \leq \ell$ and $|\mathcal{C}_i|=1$, $\forall~ 1\leq i\leq \ell$, construction of the Type-I graphical embodiment clearly results in a complete graph, $K_\ell$.  Also, for a complete graph $K_\ell,\ \chi(K_\ell)=\ell$ and $\chi(K_\ell-e)=\chi(K_\ell)-1$ for any edge $e$. Therefore, $\varepsilon^-(G^{\C})=\frac{1}{2}\ell(\ell-1)=\varepsilon(K_\ell)= \varepsilon^+(G^{\C})$. Furthermore,  let for exactly one colour class say, $|\mathcal{C}_j| \geq 2$. Clearly, $\varepsilon^-(G^{\C}) = \frac{1}{2}\ell(\ell-1) + |\mathcal{C}_j| -1$. Since, $\varepsilon^+(G^{\C}) = \frac{1}{2}\ell(\ell-1) + (|\mathcal{C}_j| -1)\cdot \sum\limits_{i=1}^{\ell}r_i$, $i\neq j$ it follows that $\varepsilon^-(G^{\C}) <\varepsilon^+(G^{\C})$. By immediate induction, it follows that for any number of colour classes, say $k,\ 1\leq k \leq \ell$, each with cardinality greater or equal to $2$, we have $\varepsilon^-(G^{\C}) <\varepsilon^+(G^{\C})$.

Conversely, consider the complete graph $K_\ell$, $\ell \geq 2$. It is known that $\chi(K_\ell) = \ell$ hence it corresponds to the colour cluster, $\C = (\mathcal{C}_i)$, $2\leq i \leq \ell$ with $|\mathcal{C}_i|=1$, $\forall~ 1\leq i\leq \ell$.  
\end{proof}

\section{Chromatic Blending}

Consider any colour cluster $\C = (\mathcal{C}_i)$, $1\leq i \leq \ell$. When two or more distinct colours $c_i,c_j,c_k,\dots, c_t$ blend (intuitively understood to be mixing of colours to obtain a distinct new colour), the new colour is written as $c_{i,j,k,\dots,t}$. It is referred to as chromatic blending. Clearly, $c_{i,i,i,\dots,i} = c_i$. Also by convention the number of times a particular colour is added does not change the blend. Therefore, it is understood that $c_{i,j,j,j,\dots,j} = c_{i,j}$. For a graph $G$ that allows $\C$ as a proper colouring, we obtain a new colour cluster by 
\begin{enumerate}\itemsep0mm
\item[(i)] Colouring the edge $uv$ with the blend $c_{i,j}$, $c(u)=c_i$, $c(v)=c_j$.
\item[(ii)] Replace each edge with a new vertex and delete all previous vertices. 
\end{enumerate}

Clearly, a partition of new vertices is obtained which corresponds to a new colour cluster say $\C'$. If for a blended colour class $\mathcal{C}_1 = (c_{j,k,\dots,m},c_{j,k,\dots,m},\dots,c_{j,k,\dots,m})$ the colour $c_s$ is added to each blended colour we may write, $\mathcal{C}_{s,\mathcal{C}_1}$.

We note that for a complete bipartite graph $K_{r_1,r_2}$ all edges are coloured $c_{1,2}$. Hence, the $r_1\cdot r_2$ new vertices are all coloured $c_{1,2}$. This implies that $G^{\C'}$ is a null graph. So on the first iteration an edgeless graph is obtained. The maximum number of edges obtained through the iterations, i.e. iterations 0 and 1, is exactly the number of edges of the initial complete bipartite graph $K_{r_1,r_2}$. We say that the null graph corresponds to total chromatic blending. 

\begin{thm}\label{Thm-3.1}
Consider a colour cluster $\C = (\mathcal{C}_i)$, $|\mathcal{C}_i|=r_i \geq 1$, $1\leq i \leq \ell,\ \ell \geq 2$. Then, the total chromatic blending is obtained on the $t_\chi = (\ell-1)$-th chromatic blending iteration.
\end{thm}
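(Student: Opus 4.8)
The plan is to follow the recursion at the level of the \emph{set of colours present}, discarding the colour-class sizes, which play no role in determining when the null graph first appears. Encode each blended colour $c_{i,j,\dots}$ by its index set $S\subseteq\{1,2,\dots,\ell\}$, so that iteration $0$ exhibits exactly the singletons $\{1\},\dots,\{\ell\}$. Since at every stage the embodiment $G^{\C}$ is the complete multipartite graph, two vertices are adjacent precisely when their colours differ, and since all the class sizes (initially $r_i\geq 1$, and thereafter products of such positive numbers) remain positive, every pair of distinct current colours actually occurs on an edge. Hence, writing $\mathcal{F}_k$ for the family of index sets present at iteration $k$, one gets the clean recursion $\mathcal{F}_0=\{\{1\},\dots,\{\ell\}\}$ and $\mathcal{F}_{k+1}=\{\,S\cup T:\ S,T\in\mathcal{F}_k,\ S\neq T\,\}$, and ``total chromatic blending at iteration $k$'' is precisely the statement $|\mathcal{F}_k|=1$, i.e.\ that $G^{\C}$ has become a null graph. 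The theorem is thus the assertion that the least $k$ with $|\mathcal{F}_k|=1$ equals $\ell-1$.

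For the bound $t_\chi\leq\ell-1$, I would prove by induction on $k$ that every $S\in\mathcal{F}_k$ satisfies $|S|\geq k+1$. The base case is the singletons. For the step, let $U=S\cup T$ with $S\neq T$ in $\mathcal{F}_k$, so $|S|,|T|\geq k+1$ by the hypothesis; if $|U|\leq k+1$ held, then $|S|=|T|=|U|=k+1$, and $S\subseteq U$ together with $|S|=|U|$ would force $S=U$, and likewise $T=U$, contradicting $S\neq T$. Hence $|U|\geq k+2$, completing the induction. Taking $k=\ell-1$: every index set present has size $\geq\ell$, hence equals $\{1,\dots,\ell\}$, so $|\mathcal{F}_{\ell-1}|=1$.

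For the matching lower bound, I would prove, again by induction on $k$, that $\mathcal{F}_k$ contains \emph{every} $(k+1)$-element subset of $\{1,\dots,\ell\}$ for $0\leq k\leq\ell-1$. The base case is clear. For the step, given a $(k+1)$-subset $U=\{u_1,\dots,u_{k+1}\}$, set $S=U\setminus\{u_1\}$ and $T=U\setminus\{u_2\}$: these are two distinct $k$-subsets, both lying in $\mathcal{F}_{k-1}$ by the inductive hypothesis, and $S\cup T=U$, so $U\in\mathcal{F}_k$. Consequently, for $0\leq k\leq\ell-2$ one has $1\leq k+1\leq\ell-1$, whence $|\mathcal{F}_k|\geq\binom{\ell}{k+1}\geq\ell\geq 2$; that is, total chromatic blending has not occurred at any iteration earlier than the $(\ell-1)$-th. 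Combined with the previous paragraph, this yields $t_\chi=\ell-1$.

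The routine verifications — that every part stays non-empty so the recursion for $\mathcal{F}_k$ is exact, the trivial case $\ell=2$ (where $K_{r_1,r_2}$ collapses to a null graph in a single step), and the estimate $\binom{\ell}{j}\geq\ell\geq2$ for $1\leq j\leq\ell-1$ — are immediate. The one step that demands care is the cardinality argument in the first induction: one must not bound $|S\cup T|$ by $\max(|S|,|T|)+1$, because distinct index sets occurring at the same iteration need not be incomparable (nesting already appears among the colours present at iteration $2$ once $\ell\geq4$); the correct route is the ``equal cardinalities force equal sets'' collapse used above.
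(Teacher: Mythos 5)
Your proof is correct, and it takes a genuinely different route from the paper's. The paper argues by induction on the number $\ell$ of colour classes: the base case is the complete bipartite graph collapsing to a null graph in one step, and the inductive step attaches a new colour class $\mathcal{C}_{k+1}$ and asserts what the vertex partition looks like at iteration $k-1$ before one final blending step; that step is stated rather loosely and, in particular, the paper never explicitly verifies that blending is \emph{not} total before iteration $\ell-1$. You instead abstract away the class sizes entirely and track only the family $\mathcal{F}_k$ of index sets present, justify the exact recursion $\mathcal{F}_{k+1}=\{S\cup T: S,T\in\mathcal{F}_k,\ S\neq T\}$ from non-emptiness of the parts of the complete multipartite embodiment, and then prove two clean combinatorial facts by induction on the iteration count $k$ (not on $\ell$): every set present at iteration $k$ has at least $k+1$ elements, and every $(k+1)$-subset of $\{1,\dots,\ell\}$ is present at iteration $k$. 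The first gives $t_\chi\leq\ell-1$, the second gives $|\mathcal{F}_k|\geq\binom{\ell}{k+1}\geq 2$ for $k\leq\ell-2$ and hence $t_\chi\geq\ell-1$; together these pin down $t_\chi$ exactly, which is more than the paper's argument cleanly delivers. Your cautionary remark about not bounding $|S\cup T|$ by $\max(|S|,|T|)+1$ is well taken, since for $\ell\geq 4$ the family $\mathcal{F}_2$ already contains nested sets (all $3$-subsets and all $4$-subsets), and the ``equal cardinality forces equal sets'' collapse is the right fix. In short: same statement, but your proof is a sharper and more self-contained argument than the one in the paper.
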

\begin{proof}
For a complete bipartite graph $K_{r_1,r_2}$ all edges are coloured $c_{1,2}$. Hence, the $r_1\cdot r_2$ new vertices are all coloured $c_{1,2}$. This implies that $G^{\C'}$ is a null graph. So on the first iteration an edgeless graph is obtained. The maximum number of edges obtained through the iterations, that is iterations $0$ and $1$, is exactly the number of edges of the initial complete bipartite graph $K_{r_1,r_2}$, (iteration $0$). Hence, the result holds for the complete bipartite graph corresponding to any $\C=(\mathcal{C}_1,\mathcal{C}_2)$. 

Assume that the result holds for the complete $\ell$-partite graph corresponding to any colour cluster $\C = (\mathcal{C}_i)$, $1\leq i \leq \ell$, $2\leq \ell \leq k$. Now, consider the complete $(k+1)$-partite graph corresponding to $\C' = (\C,\mathcal{C}_{k+1})= (\mathcal{C}_i, \mathcal{C}_{k+1})$, $1\leq i \leq k$, $r_{k+1}>0$. Clearly, a maximum number of new edges, $\sum\limits_{i=1}^{k}r_{k+1}\cdot r_i$, are added. It is known that at the $(k-1)$-th iteration, the vertex partition will correspond to the colour cluster $(\underbrace{c_{1,2,3,\dots,k},\dots,c_{1,2,3,\dots,k}}_{\varepsilon(K_{r_1,r_2,r_3,\dots,r_k)}~times},\underbrace{\mathcal{C}_{(k+1),\mathcal{C}_1},\dots,\mathcal{C}_{(k+1),\mathcal{C}_t}}_{finite ~t~ entries})$. Therefore, a new complete $(t+1)$-partite graph is obtained. Hence, for $\ell=k+1$, total chromatic blending is obtained on iteration $t_\chi=k$ because all new vertices are coloured, $c_{1,2,3,\dots,k,(k+1)}$. Hence, by induction, the general result follows for all $\ell \in \N,\ \ell \geq 2$.
\end{proof}

Total chromatic blending models for example, total genetic or chemical or cultural integration. A social dynamical application could be the following. If people belonging to the same gene pool and/or cultural system and/or other, occupy different portions of land and they then integrate the different portions into one democratic country, it will most probably take at least three generations to reach a well integrated \textit{rainbow nation}. This stems from the fact that the chromatic number of the map graph $G$ is $\chi(G)\leq 4$. If the differences are sufficiently distinct that the map graph requires a proper colouring greater than $4$, the number of generations can be noticeably more.

\begin{cor} 
Consider a colour cluster $\C = (\mathcal{C}_i)$, $|\mathcal{C}_i|=r_i \geq 1$, $1\leq i \leq \ell$, $\ell \geq 2$. We have that on the $t_\chi = (\ell-1)$-th chromatic blending iteration the number of vertices of the null graph is obtained recursively as follows.

\begin{enumerate}	\itemsep0mm
\item[(i)] Iteration 1: Consider the complete $\ell$-partite graph to obtain the colours blends

\ni $((\underbrace{c_{1,2},\dots,c_{1,2}}_{r_1\cdot r_2~ entries}),(\underbrace{c_{1,3},\dots,c_{1,3}}_{r_1\cdot r_3~ entries}),\dots, (\underbrace{c_{1,\ell},\dots,c_{1,\ell}}_{r_1\cdot r_\ell~ entries}),(\underbrace{c_{2,3},\dots,c_{2,3}}_{r_2\cdot r_3~ entries}),\dots, (\underbrace{c_{2,\ell},\dots,c_{2,\ell}}_{r_2\cdot r_\ell~ entries}),\\
(\underbrace{c_{3,4},\dots,c_{3,4}}_{r_3\cdot r_4~ entries}),\dots, (\underbrace{c_{3,\ell},\dots,c_{3,\ell}}_{r_3\cdot r_\ell~ entries}),\dots, (\underbrace{c_{(\ell-1),\ell},\dots c_{(\ell-1),\ell}}_{r_{\ell-1}\cdot r_\ell~ entries}))$.\\

\item[(ii)] Iteration 2: String the blended colour cluster of iteration (i) as:\\
$(\underbrace{c_{1,2},\dots,c_{1,2}}_{r_1\cdot r_2~ entries},\underbrace{c_{1,3},\dots,c_{1,3}}_{r_1\cdot r_3~ entries},\dots, \underbrace{c_{1,\ell},\dots,c_{1,\ell}}_{r_1\cdot r_\ell~ entries},\underbrace{c_{2,3},\dots,c_{2,3}}_{r_2\cdot r_3~ entries},\dots, \underbrace{c_{2,\ell},\dots,c_{2,\ell}}_{r_2\cdot r_\ell~ entries},\\
\underbrace{c_{3,4},\dots,c_{3,4}}_{r_3\cdot r_4~ entries},\dots, \underbrace{c_{3,\ell},\dots,c_{3,\ell}}_{r_3\cdot r_\ell~ entries},\dots, \underbrace{c_{(\ell-1),\ell},\dots c_{(\ell-1),\ell}}_{r_{\ell-1}\cdot r_\ell~ entries})$.

Now consider the complete $\sum\limits_{i=1}^{\ell-1}\sum\limits_{j=i+1}^{\ell}r_i\cdot r_j$-partite graph and apply the same multiplicative rule of Iteration 1.

\item[(iii)] The number of vertices of the null graph is obtained recursively after exactly $t_\chi = (\ell -1)$ iterations.
\end{enumerate}	
\end{cor}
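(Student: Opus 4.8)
The plan is to treat this as a direct corollary of Theorem~\ref{Thm-3.1}, by first making the recursion in items (i)--(ii) precise and then reading off the vertex count. Set $\C^{(0)}=\C$ and, given a colour cluster $\C^{(s)}$ whose colour classes have cardinalities $r_1^{(s)},r_2^{(s)},\dots,r_{m_s}^{(s)}$, form its maximum-edge graphical embodiment $G^{\C^{(s)}}$, which is the complete $m_s$-partite graph $K_{r_1^{(s)},\dots,r_{m_s}^{(s)}}$ (the embodiment discussed at the start of Section~2); then apply steps (i)--(ii) --- colour each edge by the blend of its two endpoint colours, replace edges by vertices, delete the old vertices --- to obtain the next colour cluster $\C^{(s+1)}$, whose classes are indexed by the distinct blend colours that occur. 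The first point to check is that this is well defined at every stage: the new vertices are genuinely partitioned by their blend colours, so $\C^{(s+1)}$ really is a colour cluster and $G^{\C^{(s+1)}}$ is again a complete multipartite graph, which is exactly what permits re-applying the ``multiplicative rule'' of Iteration~1 at the next step.

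Next I would record the counting identity that drives the recursion. Since every edge of $G^{\C^{(s)}}$ becomes exactly one vertex of $G^{\C^{(s+1)}}$, the number of vertices after iteration $s+1$ equals $\varepsilon^+(G^{\C^{(s)}})=\sum_{1\le i<j\le m_s}r_i^{(s)}r_j^{(s)}$, i.e.\ the sum of all pairwise products of the class sizes of $\C^{(s)}$ --- precisely the rule displayed in item~(i) for $s=0$ and reused in item~(ii) for $s=1$. Iterating, the vertex count of the graph at stage $t$ is obtained from $(r_1,\dots,r_\ell)$ by $t$ successive applications of this multiplicative rule together with the bookkeeping of which pairs of classes yield the same blend. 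It then remains to pin down when the process first produces the null graph, and here Theorem~\ref{Thm-3.1} is invoked directly: total chromatic blending occurs on iteration $t_\chi=\ell-1$, meaning $\C^{(\ell-1)}$ consists of a single colour class all of whose vertices carry the blend $c_{1,2,3,\dots,\ell}$, so $G^{\C^{(\ell-1)}}$ is edgeless and its vertex number is exactly the quantity produced by the recursion after $\ell-1$ steps. This establishes (iii), while (i) and (ii) are just the instances $s=0$ and $s=1$ of the general step, so the corollary follows.

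The step I expect to be the main nuisance is the blend bookkeeping in the intermediate clusters: for $\ell\ge 3$ several distinct pairs of classes of $\C^{(s)}$ produce the same blend --- already at iteration~$1$, $c_{1,2}$ blended with $c_{1,3}$ and $c_{1,2}$ blended with $c_{2,3}$ both give $c_{1,2,3}$ --- so the cardinality of each class of $\C^{(s+1)}$ is the sum of the products $r_i^{(s)}r_j^{(s)}$ over all pairs $\{i,j\}$ that collapse to that blend, and one must check these partial sums are tracked correctly. For the vertex total of the final null graph this collapsing is harmless, since only $\sum_{i<j}r_i^{(s)}r_j^{(s)}$ at each level matters, but it is what makes the statement a genuine recursion rather than a closed form. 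A clean way to handle the termination half, matching Theorem~\ref{Thm-3.1}, is to identify each blend colour with the set of original colours composing it: iteration~$1$ yields exactly the $2$-element sets $\{i,j\}$; since the union of two \emph{distinct} sets each of size $\ge s+1$ has size $\ge s+2$, an induction shows every blend-set after $s$ iterations has size at least $s+1$; and since at iteration $\ell-2$ no two distinct classes can omit the same original colour, every pairwise blend at iteration $\ell-1$ equals the full set $\{1,2,\dots,\ell\}$ --- hence a single class and an edgeless graph, recovering $t_\chi=\ell-1$.
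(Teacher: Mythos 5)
Your proposal is correct and follows the same route the paper intends: the corollary is read off from the edge-to-vertex recursion on successive complete multipartite embodiments together with Theorem~\ref{Thm-3.1} for termination at iteration $\ell-1$. The paper's own proof is only the one-line remark that the result is ``straightforward from the recursion rule,'' so your elaboration --- in particular the identity that the vertex count after each iteration equals $\sum_{i<j}r_i^{(s)}r_j^{(s)}$, the bookkeeping of distinct pairs collapsing to the same blend, and the induction showing every blend-set after $s$ iterations has size at least $s+1$ --- supplies exactly the details the paper leaves implicit, and is sound.
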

\begin{proof}
The proof of the result is straight forward from the recursion rule.
\end{proof}

We also observe that the maximum number of edges obtained through the iterations is equal to the number of vertices of the null graph. Furthermore, the maximum number of edges is obtained on the $t_\chi = (\ell-2)$-th iteration.

\section{On Chromatic Number of a Graph}

Inherent to a given colour colour cluster is that the number of vertices of the corresponding graphical embodiment is fixed at $\sum\limits_{i=1}^{\ell}|\mathcal{C}_i|$. In this section, we investigate chromatic blending for a given chromatic number of a graph $G$. It is known that a graph $G$ with $\chi(G) =2$ is a tree $T$ on at least two vertices. Therefore, any tree corresponds to some colour cluster $\C = (\mathcal{C}_1,\mathcal{C}_2)$, $|\mathcal{C}_1| \geq 1$, $|\mathcal{C}_2| \geq 1$. Hence, by Theorem \ref{Thm-3.1}, it follows that total chromatic blending is obtained after the iteration-$1$ and the number of vertices of the corresponding null graph is equal to the number of edges of the tree or put differently, it is equal to the number of vertices of the line graph $L(T)$. These observations lead us to the following result.

\begin{thm}\label{Thm-4.1}
Any graph that has chromatic number $\chi(G) = \ell,$ reaches total chromatic blending after $t_\chi = \ell-1$ chromatic blending iterations.
\end{thm}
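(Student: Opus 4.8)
The plan is to reduce the general graph to the complete $\ell$-partite case already settled in Theorem~\ref{Thm-3.1}. Fix a chromatic colouring of $G$ with colour classes $\mathcal{C}_1,\dots,\mathcal{C}_\ell$, put $r_i=|\mathcal{C}_i|\ge 1$, and let $\C=(\mathcal{C}_1,\dots,\mathcal{C}_\ell)$ be the associated colour cluster, so that $G$ is a graphical embodiment of $\C$. The first ingredient is the standard observation that, since this colouring uses the fewest possible colours, every pair of distinct classes $\mathcal{C}_i,\mathcal{C}_j$ is joined by at least one edge of $G$: otherwise recolouring all of $\mathcal{C}_j$ by $c_i$ would give a proper colouring with $\ell-1$ colours, contradicting $\chi(G)=\ell$.

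First I would carry out the single iteration $0\to 1$. The only blends that can appear on edges of $G$ are the colours $c_{i,j}$ with $1\le i<j\le\ell$, and by the remark above each of them does appear; hence the colour cluster $\C'$ obtained after iteration~$1$ has distinct-colour set exactly $\{c_{i,j}:1\le i<j\le\ell\}$, and its edge-maximal embodiment $G^{\C'}$ is the complete $\binom{\ell}{2}$-partite graph on these classes. The complete $\ell$-partite graph $K_{r_1,\dots,r_\ell}$, which is another embodiment of the same $\C$, behaves identically: being complete multipartite it also joins every pair of classes, so after its own iteration~$1$ the distinct-colour set is again exactly $\{c_{i,j}:1\le i<j\le\ell\}$.

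Next I would show that, from iteration~$1$ onwards, the number of remaining iterations depends only on the set of distinct colours currently present. The point is that for any complete multipartite graph $H$ whose parts carry labels $S\subseteq\{1,\dots,\ell\}$, one blending iteration turns the current label set $\mathcal{S}$ into $\{S\cup T:S,T\in\mathcal{S},\ S\ne T\}$ and again produces a complete multipartite graph; this transition rule ignores the part sizes and everything else about $H$. Therefore the graphs descended from $G$ and from $K_{r_1,\dots,r_\ell}$ carry the same set of distinct colours at every iteration $\ge 1$, so the first iteration at which this set collapses to a single colour --- which is precisely total chromatic blending, the null graph --- is the same for both. (Note also that before that happens there are always at least two classes, hence a non-null complete multipartite graph on which the next iteration is defined; in particular total blending does not occur at iteration~$0$, where $G$ still carries $\ell\ge 2$ colours.) By Theorem~\ref{Thm-3.1} applied to $K_{r_1,\dots,r_\ell}$, that collapse first happens at iteration $t_\chi=\ell-1$; hence $G$ reaches total chromatic blending after exactly $\ell-1$ iterations.

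The step that needs the most care is the invariance claim of the third paragraph --- that iterated blending of edge-maximal embodiments drives the set of distinct colours along a trajectory determined solely by that set, independent of multiplicities and of the initial graph. Granting it, the result is immediate from Theorem~\ref{Thm-3.1}. One could instead bypass Theorem~\ref{Thm-3.1} and pin the count down from scratch by proving, by induction on $m$, that after iteration $m$ (for $1\le m\le\ell-1$) the family of labels present is exactly $\{S\subseteq\{1,\dots,\ell\}:m+1\le|S|\le\min(2^m,\ell)\}$; then the smallest label size increases by $1$ per iteration, reaches $\ell$ for the first time at $m=\ell-1$, while for $m\le\ell-2$ there are $\binom{\ell}{m+1}\ge 2$ labels of minimum size, so blending is not yet total --- which gives $t_\chi=\ell-1$ directly.
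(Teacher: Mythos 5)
Your proposal is correct and follows the same overall route as the paper --- reduce the general graph to its colour cluster and invoke Theorem~\ref{Thm-3.1} --- but it supplies a bridging argument that the paper's one-line proof omits entirely. Theorem~\ref{Thm-3.1} is proved only for the edge-maximal embodiment $K_{r_1,\dots,r_\ell}$, whereas an arbitrary $G$ with $\chi(G)=\ell$ is generally a proper spanning subgraph of that complete multipartite graph, so iteration~$1$ applied to $G$ produces a colour cluster with different multiplicities (the class for the blend $c_{i,j}$ has size equal to the number of $\mathcal{C}_i$--$\mathcal{C}_j$ edges of $G$, not $r_i r_j$). The paper simply asserts that the result ``follows from Theorem~\ref{Thm-3.1}'' without addressing this. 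You close the gap with exactly the right two observations: (a) in a chromatic colouring every pair of colour classes is joined by at least one edge, so the \emph{set} of distinct blends after iteration~$1$ is the full $\{c_{i,j}:i<j\}$ in both cases; and (b) from iteration~$1$ onward the embodiments used are complete multipartite, so the label-set transition $\mathcal{S}\mapsto\{S\cup T: S,T\in\mathcal{S},\ S\ne T\}$ depends only on the current set of distinct labels and not on multiplicities, whence the two trajectories collapse to a single colour at the same step. Your optional direct computation (labels present after iteration $m$ are exactly the subsets of size between $m+1$ and $\min(2^m,\ell)$) also checks out and would give a self-contained proof bypassing Theorem~\ref{Thm-3.1} altogether. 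In short: same skeleton as the paper, but yours is the version that actually constitutes a proof.
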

\begin{proof}
Any graph $G$ that has $\chi(G) = \ell,$ has a corresponding colour cluster $\C = (\mathcal{C}_1,\mathcal{C}_2,\dots,\mathcal{C}_\ell)$, $|\mathcal{C}_i| \geq 1$, $\forall i$, $1\leq i \leq \ell$. Therefore, the result follows from Theorem \ref{Thm-3.1}.
\end{proof}

Note that Theorem \ref{Thm-4.1} holds for any loopless graph provided that each vertex $v$ has $d(v) \geq 1$.  The order of the maximum clique of a graph $G$ is its clique number, $\omega(G)$. Since $\chi(G) \geq \omega(G)$ (see \cite{AES}) and $\chi(G) \leq \Delta(G)+1,$ the trivial bounds for $t_\chi$ is given by $\omega(G)-1 \leq t_\chi \leq \Delta(G)$. It also follows that for any triangle-free graph $G$ with $\chi(G) =k$, the Mycielski graph of $G$, denoted by $\mu(G)$, has $t_\chi(\mu(G)) = k$. Furthermore, if $G$ is a graph and $H$ is any subgraph of $G$ then $t_\chi(H) \leq t_\chi(G)$. Finally, all results for $\chi(G)$ hold in terms of $t_\chi(G)$ by substituting $\chi(G) = t_\chi(G) +1$.

\section{Conclusion}

This paper serves as an introductory note on chromatic blending of colour clusters. In this paper, we proved that a graphical embodiment with minimum or maximum number of edges exists for any colour cluster. It will be interesting to find an elegant closed formula for the number of vertices of the null graph obtained from total chromatic blending.

The application of total chromatic blending is obtaining homogeneity amongst distinct objects through some process. Physical total colour blending is observed in many physical systems such as interpolation of electromagnetic waves (or rays), optical light, chemical bonding, mutation and other genetic engineering methods. Other more subtle social applications are political campaigning, religious campaigning, society structures to ensure orderliness and alike.

Chemical bonding can be modelled as colour blending. Each distinct chemical element is considered to be a colour and the chemical bond, the blend. Hence, in principle it is foreseen that this notion can find application in mathematical chemistry as well. This will open an interesting field of research.

\end{document}